\DeclarePairedDelimiter{\ceil}{\lceil}{\rceil}
\def\@themcountersep{}
\newtheorem{THEO}{Theorem}[section]
\newtheorem{ALGO}[THEO]{Algorithm}
\newtheorem{CORO}[THEO]{Corollary}
\newtheorem{DEFI}[THEO]{Definition}
\def\0{\mbox{\bf 0}}
\def\1{\mbox{\bf 1}}
\def\2{\mbox{\bf 2}}
\def\3{\mbox{\bf 3}}
\def\4{\mbox{\bf 4}}
\def\5{\mbox{\bf 5}}
\def\6{\mbox{\bf 6}}
\def\7{\mbox{\bf 7}}
\def\8{\mbox{\bf 8}}
\def\9{\mbox{\bf 9}}
\def\a{\mbox{\boldmath $a$}}
\newdimen\zhige \zhige=0pt
\def\chige#1{{\setbox\zhige\hbox{#1}\ifdim\ht\zhige=1ex\accent24 #1%
  \else\ooalign{\unhbox\zhige\crcr\hidewidth\char24\hidewidth}\fi}}
\def\e{\mbox{\boldmath $e$}}
\def\g{\mbox{\boldmath $g$}}
\def\l{\mbox{\boldmath $l$}}
\def\u{\mbox{\boldmath $u$}}
\def\v{\mbox{\boldmath $v$}}
\def\w{\mbox{\boldmath $w$}}
\def\x{\mbox{\boldmath $x$}}
\def\y{\mbox{\boldmath $y$}}
\def\z{\mbox{\boldmath $z$}}
\def\A{\mbox{\boldmath $A$}}
\def\H{\mbox{\boldmath $H$}}
\def\U{\mbox{\boldmath $U$}}
\def\W{\mbox{\boldmath $W$}}
\def\CC{\mbox{$\cal C$}}
\def\KC{\mbox{$\cal K$}}
\def\PC{\mbox{$\cal P$}}
\def\Real{\mbox{$\mathbb{R}$}}
  \newcommand{\red}[1]{\textcolor{red}{#1}}
\title{Polyhedral-based Methods for Mixed-Integer SOCP in Tree Breeding}
\author[1]{Sena Safarina}
\author[2]{Tim J. Mullin}
\author[1]{Makoto Yamashita} 
\affil[1]{Department of Mathematical and Computing Science,
 Tokyo Institute of Technology, 2-12-1-W8-29 Ookayama, Meguro-ku, Tokyo
 152-8552, Japan.}                    
 \affil[2]{
 The Swedish Forestry Research Institute (Skogforsk), Box 3, S\"{a}var 918 21, Sweden; 
and 224 rue du Grand-Royal Est, QC, J2M 1R5, Canada.}
\begin{document}
\maketitle              

\begin{abstract}
Optimal contribution selection (OCS) is a mathematical optimization problem that aims to maximize the total benefit from selecting a group of individuals under a constraint on genetic diversity. We are specifically focused on OCS as applied to forest tree breeding, when selected individuals will contribute equally to the gene pool. Since the diversity constraint in OCS can be described with a second-order cone, equal deployment in OCS can be mathematically modeled as mixed-integer second-order cone programming (MI-SOCP). If we apply a general solver for MI-SOCP, non-linearity embedded in OCS requires a heavy computation cost. To address this problem, we propose an implementation of lifted polyhedral programming (LPP) relaxation and a cone-decomposition method (CDM) to generate effective linear approximations for OCS. In particular, CDM successively solves OCS problems much faster than generic approaches for MI-SOCP.
The approach of CDM is not limited to OCS, so that we can also apply the approach to other MI-SOCP problems.
\\
\\
\noindent {\bf Keywords:} Second-order cone programming; Mixed-integer conic programming; Conic relaxation; Tree Breeding; Equal deployment problem; Geometric cut; Optimal selection
\\
\\
\noindent {\bf MSC2010 classification:} 
90C11  	Mixed integer programming,
90C25  	Convex programming,
90C59  	Approximation methods and heuristics,
90C90  	Applications of mathematical programming,
92-08   Biology and other natural sciences (Computational methods).
\end{abstract}


\section{Introduction}
As in other types of breeding, forest tree improvement is based on recurrent cycles of selection, mating and testing. In the selection phase, we should take genetic diversity into consideration so that tree health and the potential for genetic gain in the future are conserved. A general objective of optimal contribution selection (OCS)~\cite{ahlinder2014using, meuwissen2002gencont, mullin2016using, yamashita2017efficient} is to maximize the total economic benefit under a genetic diversity constraint by determining the gene contribution to be made from each candidate. Based on the type of contribution, OCS problems can be classified into unequal and equal deployment problems. While an unequal deployment problem (UDP) does not require the same contribution for selected candidates, an equal deployment (EDP) stipulates that a specified number of selected individuals must contribute equally to the gene pool.

A mathematical optimization formulation for UDP is given by Meuwissen \cite{meuwissen1997maximizing} as follows:
\begin{eqnarray}
\begin{array}{lcl}
\label{eq:QCP}
\mbox{maximize} &:&  \g^T\x \\
\mbox{subject to} &:& \e^T \x = 1, \\
& & \l \leq \x \leq \u, \\
& & \x^T\A\x \le 2\theta.
\end{array}
\end{eqnarray}
The decision variable is $\x \in \Real^m$ that corresponds to the gene contributions of individual candidates, where $m$ is the total number of candidates. The objective is to maximize the total benefit $\g^T\x$ where the vector $\g=\{g_1, g_2, \ldots,g_m\}$ contains the estimated breeding values (EBVs)~\cite{lynch1998genetics} representing the genetic value of candidates in $\x$. In this paper, we assume that $\g$ is given. Using a vector of ones $\e\in\Real^m$, the constraint $\e^T\x=1$ requires that the total contribution of all candidates be unity. The next constraint is composed by a lower bound $\l\in\Real^m$ and an upper bound $\u\in\Real^m$.

The crucial constraint in (\ref{eq:QCP}) is $\x^T\A\x \le 2\theta$ that requires the group coancestry $\frac{\x^T \A \x}{2}$ be under an appropriate level $\theta\in\Real_{++}$, where $\mathbb{R}_{++}$  is the set of positive real numbers. The group coancestry constraint $\x^T\A\x \le 2\theta$ was originally introduced by Cockerham \cite{cockerham1967group}, while the construction of the numerical relationship matrix $\A\in\Real^{m\times m}$ was proposed in Wright~\cite{wright1922coefficients}.
Shortly speaking, each element $A_{ij}$ in the matrix $A$ is the probability that genotypes $i$ and $j$ have a common ancestor.
Pong-Wong and Woolliams~\cite{pong2007optimisation} observed that the matrix $\A$ is always positive definite, and they formulated the UDP as a semi-definite programming (SDP) problem. Their SDP approach gave the exact optimal value to the UDP for the first time, but Ahlinder~\cite{ahlinder2014using} reported that the computation cost of the SDP approach was very high, even when using a parallel SDP solver~\cite{SDP-HANDBOOK, SDPA6}. To reduce the heavy computation burden, Yamashita et al.~\cite{yamashita2017efficient} proposed an efficient numerical method that exploits the sparsity in the inverse matrix $\A^{-1}$.
Their method is based on second-order cone programming (SOCP) \cite{alizadeh2003second}.

The current research is mainly concerned with EDP of form:
\begin{eqnarray}
\begin{array}{lcl}
\label{eq:MI-QCP}
\mbox{maximize} &:&  \g^T\x \\
\mbox{subject to} &:& \e^T \x = 1, \\ 
& & x_i \in \left\{0,\frac{1}{N}\right\} \ \mbox{for} \ i=1,\ldots,m, \\
& & \x^T\A\x \le 2\theta.
\end{array}
\end{eqnarray}
We should emphasize that the simple bound $\l \le \x \le \u$ in the UDP is replaced by another constraint $x_i \in \left\{0,\frac{1}{N}\right\}$ to require an equal contribution from each selected candidate. Here, $N$ is the parameter to indicate the number of chosen candidates. In short, we have to choose exactly $N$ individuals from a list of $m$ available candidates in the EDP. Through this paper, we assume that \eqref{eq:MI-QCP} is feasible.

The OCS problem has been widely solved through a software package \texttt{GENCONT} developed by Meuwissen~\cite{meuwissen2002gencont}.
The numerical method implemented in \texttt{GENCONT} is based on Lagrange multipliers, but it forcibly fixes variables that exceed lower or upper bounds $\left(0\le x_i\le\frac{1}{N}\right)$ at the corresponding lower and upper bound. Thus, even though \texttt{GENCONT} generates a solution quickly, the solution is often suboptimal. To resolve this difficulty in \texttt{GENCONT}, another tool \texttt{dsOpt}, incorporated in the software package \texttt{OPSEL} \cite{mullin2013opsel}, was proposed by Mullin and Belotti~\cite{mullin2016using}. \texttt{dsOpt} is an implementation of the branch-and-bound method combined with an outer approximation method~\cite{duran1986outer}. This implementation was designed to acquire exact optimal solutions, but \texttt{dsOpt} generates a huge number of subproblems in the framework of branch-and-bound, so that computing the solution takes a long time. Hence, there has been a strong desire for  a different approach to solve the EDP in a more practical time.

In contrast to existing implementations~\cite{meuwissen2002gencont, mullin2016using}, this paper is focused on the fact that the crucial quadratic constraint $\x^T \A \x \le 2 \theta$ in (\ref{eq:QCP}) and (\ref{eq:MI-QCP}) can be described as a second-order cone
$\left(
\sqrt{2\theta}N,\U \x \right)\in\mathcal{K}^m$.
The matrix $\U$ is the Cholesky factorization of $\A$ such that $\A=\U^T\U$.
Throughout this paper, we use $\mathcal{K}^m$ to denote the $(m+1)$-dimensional second-order cone:
\begin{equation*}
\mathcal{K}^m = \{(v_0,\v) \in \Real_+ \times \Real^m : ||\v||_2 \le v_0 \}.
\end{equation*}
Introducing a new variable $\y=N\x$, we convert the OCS problem (\ref{eq:MI-QCP}) into an MI-SOCP formulation:
\begin{eqnarray}
\begin{array}{lcl}
\label{eq:MI-SOCP}
\mbox{maximize} &:&  \frac{\g^T\y}{N} \\[0.4em]
\mbox{subject to} &:& \e^T \y = N, \\ 
& & \left(\sqrt{2\theta}N,\U \y \right)\in\mathcal{K}^m, \\
& & y_i \in \{0,1\} \ \mbox{for} \ i=1,\ldots,m.
\end{array} 
\end{eqnarray}
The main difficulty in this MI-SOCP formulation is the non-linearity arising from the second-order cone, and this leads to a heavy computation cost.

In this paper, we propose a lifted polyhedral programming relaxation with active constraint selection method (LPP-ACSM) that removes the non-linearity, exploiting an extension of polyhedral programming relaxation for the second-order cone problem~\cite{ben2001polyhedral, bertsekas1999nonlinear, vielma2008lifted}. We also propose a cone decomposition method (CDM) that is based on cutting-plane methods (geometric cut based on projection~\cite{bialon2003some}) and a Lagrangian multiplier method. In particular, we prove that the Lagrangian multiplier method gives the an analytical solution for orthogonal projection onto the three-dimensional cones, therefore, the proposed CDM generates the linear cuts without relying on iterative methods.

The remainder of this paper is organized as follows. In Section~\ref{sec:2}, we briefly review LPP, then we propose its enhancement LPP-ACSM. Section~\ref{sec:3} gives the details of CDM. The numerical results will be presented in Section~\ref{sec:4}. Finally, in Section~\ref{sec:5}, we formulate some conclusions and discuss for future studies.

\section{Lifted Polyhedral Programming Relaxation}\label{sec:2}

Lifted polyhedral programming (LPP) relaxation~\cite{bertsekas1999nonlinear, vielma2008lifted} is an approach to solve MI-SOCP problems by employing a polyhedral relaxation. Instead of a MI-SOCP problem that involves difficult non-linear constraints, we solve a mixed-integer \textit{linear} programming problem (MI-LP) as the resultant problem. 

Figure~\ref{fig:sub1} illustrates a second-order cone $\KC^m$ with dimension $m=2$. In LPP relaxation, many hyper-planes are generated for constructing a polyhedron cone $\PC_\epsilon^m$ to approximate $\KC^m$ as illustrated in Figure~\ref{fig:sub2}. Here, $\epsilon > 0$ is a parameter to control the tightness of LPP relaxation. Let $\KC_{\epsilon}^m$ be an $\epsilon$ extension of $\KC^m$ defined by  $\mathcal{K}^m_{\epsilon} = \{(v_0,\v) \in \Real_+ \times \Real^m : ||\v||_2 \le (1+\epsilon)v_0 \}$. Ben-Tal and Nemirovski~\cite{ben2001polyhedral} showed that  $\mathcal{P}^m_{\epsilon}$ is wedged between $\KC^m$ and  $\KC_{\epsilon}^m$. More precisely,  $\mathcal{P}^m_{\epsilon}$ satisfies $\mathcal{K}^m \subsetneq \mathcal{P}^m_\epsilon \subsetneq \mathcal{K}^m_{\epsilon}$.

\begin{figure}[htb]
\centering
\begin{subfigure}{.5\textwidth}
  \centering
  \includegraphics[scale=0.52]{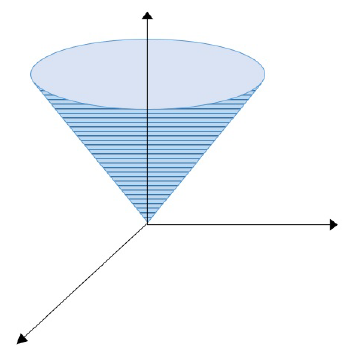}
  \vspace{0.5cm}
  \caption{Second-Order Cone}
  \label{fig:sub1}
\end{subfigure}%
\begin{subfigure}{.5\textwidth}
  \centering
  \includegraphics[scale=0.5]{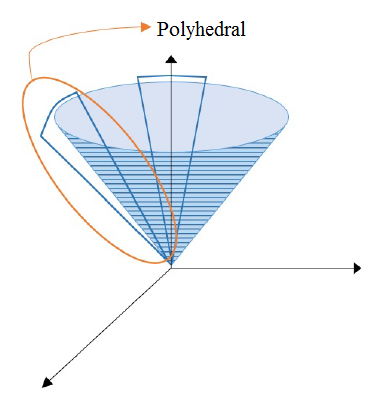}
  \caption{Polyhedral Relaxation}
  \label{fig:sub2}
\end{subfigure}
\caption{Polyhedral Relaxation}
\label{fig:Polyhedral Relaxation}
\end{figure}

Therefore, when we take smaller $\epsilon$, the relaxation $\mathcal{P}^m_\epsilon$ becomes tighter. However, in this case, we require more hyper-planes to build $\mathcal{P}^m_\epsilon$, as will be seen below.

The definition of the polyhedral relaxation $\PC^m_{\epsilon}$ is given in Vielma et al.~\cite{vielma2008lifted}. They first decompose the $m+1$-dimensional second-order cone $\KC^{m}$ into multiple 2-dimensional second-order cone $\KC^2$:
\begin{equation}
\begin{split}
\KC^m := &\{(v_0,\v) \in \Real_+ \times \Real^m : 
\exists (\delta^j)^J_{j=0} \in \Real^{T(m)} \ \mbox{such that} \\[0.3em]
&\qquad v_0 = \delta^J_1, \\[0.3em]
&\qquad \delta^0_i = v_i \ \mbox{for} \ i \in \{1, \cdots, m\},\\[0.3em]
&\qquad \left(\delta^j_{2i-1}, \delta^j_{2i}, \delta^{j+1}_i\right) \in \KC^2 \ \mbox{for} \ i \in \left\{1, \cdots, \left\lfloor \frac{t_j}{2}\right\rfloor \right\},\ j \in\{0,\cdots, J-1\},\\[0.3em]
&\qquad \delta_{t_j}^j = \delta_{\lceil{t_j/2}\rceil}^{j+1} \ \mbox{for} \ j \in \left\{0, \cdots, J-1\right\} \ \mbox{s.t.}  \ t_j \ \mbox{is odd}\}
\end{split}\label{eq:define-Pm}
\end{equation}
with $J = \left\lceil \log_2 m\right\rceil$, and $\{t_j\}^J_{j=0}$ is defined recursively as $t_0 = m$ and $t_{j+1} = \left\lceil\frac{t_j}{2}\right\rceil $ for $j \in \{0, \ldots, J-1\}$
so that $T(m) = \sum_{j=0}^{J} t_j$. For example, $\KC^4$ is determined by a quadratic constraint $v_0^2 \ge v_1^2 + v_2^2 + v_3^2 + v_4^2$. This constraint is decomposed into three constraints $v_0^2 \ge \delta_1 + \delta_2$, $\delta_1 \ge v_1^2 + v_2^2$ and $\delta_2 \ge v_3^2 + v_4^2$, and each of the decomposed constraints can be described as $\KC^2$. 

Then, a replacement of $\KC^2$ in \eqref{eq:define-Pm}
by $\mathcal{W}_j (\epsilon)$ defined below
generates $\PC_{\epsilon}^m$:
\begin{equation}\label{eq:ws}
\begin{split}
\mathcal{W}_j(\epsilon) :=& \left\{(v_0, v_1, v_2) \in \Real_+ \times \Real^2 : \exists(\alpha, \beta) \in \Real^{2{s_{j} (\epsilon)}} \ \mbox{s.t}\right.\\
&\qquad v_0 = \alpha_{s_{j} (\epsilon)} \cos \left(\frac{\pi}{2^{s_{j} (\epsilon)}}\right) + \beta_{s_{j}(\epsilon)} \sin \left(\frac{\pi}{2^s}\right),\ \alpha_1 = v_1 \cos (\pi) + v_2 \sin (\pi),\\
&\qquad \beta_1 \ge |v_2 \cos (\pi) - v_1 \sin (\pi)|,\ \alpha _{i+1} = \alpha_{i} \cos \left(\frac{\pi}{2^i}\right) + \beta_i \sin \left(\frac{\pi}{2^i}\right),\\ 
&\qquad \beta _{i+1} \ge \left| \beta_{i} \cos \left(\frac{\pi}{2^i} \right) - \alpha_i \sin \left(\frac{\pi}{2^i}\right) \right|,\left.\ \mbox{for} \ i \in \{1, \ldots, {s_{j} (\epsilon)}-1\}\right\}
\end{split}
\end{equation}
where
\begin{equation*}
s_{j} (\epsilon) = \ceil *{\frac{j+1}{2}} - \ceil*{\log_4\left(\frac{16}{9}\pi^{-2}\log(1+\epsilon)\right)} \ \mbox{for} \ j\in\{0, \ldots, J-1\}.
\end{equation*}
Note that the number of linear constraints in $\mathcal{W}_j (\epsilon)$ is $5 + 3 (s_j(\epsilon) -1)$, when we divide each linear inequality that involves absolute values into two linear inequalities. 

As a preliminary experiment, we tested this approach across of range of parameter values for $2\theta$ and $\epsilon>0$. To solve the resultant MI-LP, we used the \texttt{CPLEX} package and set the duality gap $5\%$ as the stopping criterion of \texttt{CPLEX}, so the accuracy of the obtained objective value is 5\%. We implemented this approach using \texttt{Matlab R2017b} on a 64-bit Windows 10 PC, Xeon CPU E3-1231 (3.40 GHz) with 8GB memory space. Moreover, we set the number of chosen candidates as $N=50$.

Table~\ref{table:lppresult} shows the numerical results of the LPP relaxation for moderately large problems ($m = 200,1050$ and $2045$), changing $(1+\epsilon)2\theta$. (The bottom part of Table~\ref{table:lppresult} includes the numerical results of LPP-ACSM, which will be explained later.) The first column in the table is the problem size $m$, the second the parameter for the diversity constraint $2\theta$, and the third column a relaxation value $(1+\epsilon)2\theta$. The fourth column shows $\epsilon$ that corresponds to the third column. (More precisely, we adjust $\epsilon$ in the fourth column so that we obtain the values in the third column as $(1+\epsilon)2\theta$.) The fifth, sixth, and seventh columns show the computation time in seconds to build the mathematical model including the construction of $\W_j (\epsilon)$, the computation time to solve the mathematical model by \texttt{CPLEX}, and the total computation time, respectively. The last two columns show the test results: the group coancestry  $\x^T\A\x$ and the objective value $\g^T\x$. In the table, "OOM" indicates that \texttt{CPLEX} ran "out of memory" and could not solve the MI-LP.
\begin{table}[htb]
\caption{Numerical results of LPP and LPP-ACSM for gap$=5\%$}\label{table:lppresult}
\vspace{-1em}
\begin{center}
\makebox[\textwidth][c]{\begin{tabular}{c c c  c r r r r r }
\hline
\multirow{2}{*}{$m$} & \multirow{2}{*}{$2\theta$} &\multirow{2}{*}{$(1+\epsilon)2\theta$} & \multirow{2}{*}{$\epsilon$} & \multicolumn{3}{c}{Time (sec)} & Group & Objective \\
\cline{5-7}
& & & & Builder & Solver & Total & Coancestry & Value\\
\hline
\hline
\multicolumn{9}{c}{LPP}\\
\hline
\multirow{3}{*}{200}& \multirow{3}{*}{0.0334} &  0.03363 & 0.0070 & 3.86 & 10.74 & 14.59 & 0.03380 & 24.96\\
& & 0.03380 & 0.0060 & 3.97 & 10.72 & 14.69 & 0.03380 & 24.96\\
& & 0.03373 & 0.0050 & 6.98 & 19.83 & 26.81 & 0.03340 & 24.83\\
\hline
\multirow{3}{*}{1050}& \multirow{3}{*}{0.0627} & 0.06358 & 0.0070 & 549.40 & 9.18 & 558.58 & 0.06311 & 24.91\\
& & 0.06345 & 0.0060 & 549.89 & 11.84 & 561.74 & 0.06298 & 24.87\\
& & 0.06333 & 0.0050 & 951.22 & 25.31 & 976.54 & 0.06129 & 24.54\\
\hline
\multirow{3}{*}{2045}& \multirow{3}{*}{0.0711} & 0.07209 & 0.0070 & 4702.58 & 256.16 & 4958.74 & 0.07140 & 438.31 \\
& & 0.07196 & 0.0060 & \multicolumn{5}{r}{OOM}\\
& & 0.07181 & 0.0050 &  \multicolumn{5}{r}{OOM}\\
\hline
\hline
\multicolumn{9}{c}{LPP-ACSM}\\
\hline
\multirow{3}{*}{200} & \multirow{3}{*}{0.0334} & 0.03387 & 0.0070 & 3.94 & 3.09 & 7.03 & 0.03360 & 24.94\\
&  & 0.03380 & 0.0060 & 4.09 & 2.72 & 6.82 & 0.03360 & 24.94\\
& & 0.03373 & 0.0050 & 6.57 & 41.18 & 47.75 & 0.03340 & 24.84 \\
\hline
\multirow{3}{*}{1050}& \multirow{3}{*}{0.0627} & 0.06358  & 0.0070 & 568.67 & 34.27 & 602.94  & 0.06307 & 24.81 \\
& & 0.06345 & 0.0060 & 655.75 & 396.26 & 1052.01  & 0.06307 & 24.82\\
& & 0.06333 & 0.0050 & 1012.67 & 217.33 & 1230.01 & 0.06215 & 24.72 \\
\hline
\multirow{3}{*}{2045}& \multirow{3}{*}{0.0711} & 0.07209 & 0.0070 &  4974.45 & 434.32  & 5408.78 & 0.06020  & 429.02 \\
& & 0.07196 & 0.0060 &  &  &  &  & OOM \\
& & 0.07181 & 0.0050 &  &  &  &  & OOM\\
\hline
\end{tabular}}
\end{center}
\end{table}

Through Table~\ref{table:lppresult}, we observe that the group coancestry $\x^T \A \x$ is closer to the threshold $2\theta$ when we use smaller $\epsilon>0$. This confirms that smaller $\epsilon$ leads to a tighter relaxation on the second-order cone. For $m=200$, the problem with $\epsilon=0.0060$ is not optimal since the diversity constraint $\x^T\A\x \le 2\theta$ is violated. Thus, we should use tighter $\epsilon$, but the tighter $\epsilon$ requires longer computation time due to the rapid increment in a number of linear constraints of the LPP relaxation. In addition, for the larger problem $m=2045$ with very tight $(1+\epsilon)2\theta$,  this approach fails to obtain the solution due to out of memory. Therefore, we need an efficient scheme to reduce the large number of linear constraints.

\subsection{LPP relaxation with the active constraint selection method}\label{sec:LPP-ACSM}
We embed an active constraint selection method into the LPP approach to reduce the number of linear inequalities.

\begin{DEFI}[Active Constraint] Let $\a_i^T\x \le b_i \ (i=1,\ldots,p)$ be inequality constraints
in an optimal optimization problem $P$ with $\a_i\in\mathbb{R}^{q}$ 
and $b_i\in\mathbb{R}$ \ $(i = 1,\ldots, p)$, 
and let $\x^*$ be an optimal solution of the optimization problem $P$.
The inequality constraint 
$\a_i^T\x \le b_i$ is said to be {\upshape active} at $\x^*$
if $\a_i^T\x^*=b_i$.
Otherwise, the constraint $\a_i^T\x \le b_i$ is called {\upshape inactive} at $\x^*$.
\end{DEFI}
\begin{ALGO}[Active constraint selection method]
Let $P(\epsilon)$ be an optimal solution that is defined with a parameter $\epsilon$ and $\x^*(\epsilon)$ be its optimal solution. Let $S = \{\epsilon_1, \epsilon_2, \ldots, \epsilon_l \}$ be a parameter set $\epsilon$ in preliminary experiments. If an inequality constraint $\a_i^T x \le b_i$ in $P(\epsilon)$ is active at $\x^*(\epsilon)$ for any $\epsilon \in S$, 
we replace $\a_i^T \x \le b_i$ with the equality constraint $\a_i^T \x = b_i$.
\end{ALGO}
Using such method, we conducted preliminary experiments with the parameter set $S = \{0.04,$  $0.05,0.08\}$ (different from $\epsilon$ in Table~\ref{table:lppresult}) and found that the constraint
\begin{equation*}
\beta_1 \ge -v_2 \cos (\pi) +v_1 \sin (\pi)
\end{equation*}
in (\ref{eq:ws}) was always active at $\x^*(\epsilon)$ for all $\epsilon \in S$. Therefore, we replace the constraint $\beta_1 \ge v_2 \cos (\pi) -v_1 \sin (\pi)$ with the equality $\beta_1 = -v_2 \cos (\pi) +v_1 \sin (\pi)$. This replacement can reduce the number of inequalities, and we could expect the reduction in computation time.

The latter half of Table~\ref{table:lppresult} shows the computation time in the framework of LPP relaxation combining with the active constraint selection method (LPP-ACSM). The same parameter as LPP was set for LPP-ACSM. We observed that the group coancestry $\x^T \A \x$ and the objective values $\g^T \x$ are similar for LPP and LPP-ACSM, but contrary to our expectation, the computation time for LPP-ACSM was longer.

Table~\ref{table:nnz} compares the number of rows, columns, and nonzero elements in MI-LP problems that are solved by LPP and LPP-ACSM for $\epsilon=0.007$.
We notice that LPP-ACSM generated more number of rows, columns, and nonzero elements than did LPP. In LPP-ACSM, CPLEX tried to remove $\beta_1$ by substituting $\beta_1$ with $-v_2 \cos (\pi) +v_1 \sin (\pi)$ through its preprocessing phase, but this sacrificed the sparsity in LPP. Thus, LPP-ACSM incurs more nonzero elements than LPP, and this makes LPP-ACSM slow to obtain the result.

\begin{table}[htb]
	\begin{center}
		\caption{The number of nonzero elements in MI-LP problems arising from LPP and LPP-ACSM}\label{table:nnz}
		\vspace{-1em}
		\makebox[\textwidth][c]
		{\begin{tabular}{c | r r r | r r r}
			\hline
			\multirow{2}{*}{$m$}  & \multicolumn{3}{c|}{LPP} & \multicolumn{3}{c}{LPP-ACSM}\\
			\cline{2-7}
			&  \multicolumn{1}{c}{$\#$ row} & \multicolumn{1}{c}{$\#$ column} & \multicolumn{1}{c|}{$\#$ nonzeros} & \multicolumn{1}{c}{$\#$ row} & \multicolumn{1}{c}{$\#$ column} & \multicolumn{1}{c}{$\#$ nonzeros}  \\
			\hline
			200 & 1572 & 1059 & 6725 & 1497 & 984 & 6575 \\
			1050 & 8303 & 5806 & 52611 & 60100 & 33476 & 355316 \\
			2045 & 16175 & 11364 & 71305 & 116896 & 65073 & 660721\\
			\hline 
		\end{tabular}}
	\end{center}
\end{table}

The main disadvantage of LPP was the large number of linear constraints. This disadvantage is more critical when the relaxation is tight, and cannot be completely removed by LPP-ACSM. In the next section, therefore, we propose another approach for solving EDP~\eqref{eq:MI-SOCP}.

\section{Cone Decomposition Method}\label{sec:3}
In this section, we propose a cone decomposition method (CDM) for EDP~\eqref{eq:MI-SOCP}. The basic concept of the cone decomposition method also draws on the properties of second-order cones. The above LPP approach decomposes an $m+1$-dimensional second-order cone $\KC^m$ into multiple two-dimensional second-order cones $\KC^2$ in a recursive style as shown in  \eqref{eq:define-Pm}. In constrast, the proposed CDM makes use of different decomposition, based on the following theorem from~\cite{vielma2017extended}.
\begin{THEO}\label{cor1}
{\upshape \cite{vielma2017extended}}
Let
\begin{equation*}
\hat{\H}^m:=\left\{(v_0,\v,\w)\in\Real^{(2m+1)}:v_j^2\leq w_j v_0,\forall j\in\{1, \ldots, m\},\sum_{j=1}^{m }w_j\leq v_0\right\},
\end{equation*}
then $\mathcal{K}^m=\mbox{Proj}_{(v_0,\v)}(\hat{\H}^d)$, where $\mbox{Proj}_{(v_0,\v)}$ is the orthogonal projection onto the space of $(v_0,\v)$ variables.
\end{THEO}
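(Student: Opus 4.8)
The plan is to establish the set equality $\mathcal{K}^m=\mbox{Proj}_{(v_0,\v)}(\hat{\H}^m)$ by proving the two inclusions separately, using the observation that each constraint $v_j^2\le w_j v_0$ in the definition of $\hat{\H}^m$ is a rotated second-order cone constraint, so that $v_0\ge 0$ and $w_j\ge 0$ are implicitly part of the description of $\hat{\H}^m$ (for $v_0>0$ the bound $v_j^2\le w_jv_0$ already forces $w_j\ge 0$, and the case $v_0\le 0$ is degenerate).

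For the inclusion $\mbox{Proj}_{(v_0,\v)}(\hat{\H}^m)\subseteq\mathcal{K}^m$, I would take an arbitrary $(v_0,\v,\w)\in\hat{\H}^m$ and simply sum the $m$ inequalities $v_j^2\le w_jv_0$ over $j\in\{1,\ldots,m\}$, obtaining $\sum_{j=1}^m v_j^2\le v_0\sum_{j=1}^m w_j$. Combining this with $\sum_{j=1}^m w_j\le v_0$ and $v_0\ge 0$ yields $\|\v\|_2^2=\sum_{j=1}^m v_j^2\le v_0^2$, hence $\|\v\|_2\le v_0$, i.e. $(v_0,\v)\in\mathcal{K}^m$. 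Since $\mathcal{K}^m$ does not constrain $\w$, the projection of $(v_0,\v,\w)$ onto the $(v_0,\v)$-space lies in $\mathcal{K}^m$.

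For the reverse inclusion $\mathcal{K}^m\subseteq\mbox{Proj}_{(v_0,\v)}(\hat{\H}^m)$, given $(v_0,\v)\in\mathcal{K}^m$ I must exhibit a witness $\w$. When $v_0>0$, the natural ``equalizing'' choice is $w_j:=v_j^2/v_0\ge 0$, which makes each rotated-cone inequality tight ($v_j^2=w_jv_0$) and satisfies $\sum_{j=1}^m w_j=\|\v\|_2^2/v_0\le v_0$ precisely because $\|\v\|_2\le v_0$; hence $(v_0,\v,\w)\in\hat{\H}^m$ and projects onto $(v_0,\v)$. The only remaining case is $v_0=0$, which forces $\v=\mathbf{0}$; there I take $\w=\mathbf{0}$ and verify the defining inequalities directly. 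This proves both inclusions and hence the claim.

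The argument is elementary, and I do not anticipate a genuine obstacle; the two points to handle with care are (i) the boundary case $v_0=0$, where the explicit formula $w_j=v_j^2/v_0$ breaks down and must be replaced by an ad hoc choice, and (ii) making explicit that $v_0\ge 0$ (and $w_j\ge 0$) are inherited from the rotated second-order cone constraints, since otherwise points with $v_0<0$ could spuriously enter the projection. Alternatively, since the statement is quoted verbatim from~\cite{vielma2017extended}, it may simply be invoked as is; the sketch above is included for completeness.
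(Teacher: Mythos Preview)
Your argument is correct and is the standard proof of this fact. Note, however, that the paper does not actually prove Theorem~\ref{cor1}: it is stated with the citation \cite{vielma2017extended} and used as a black box, so there is no ``paper's own proof'' to compare against. Your closing remark, that the result may simply be invoked from~\cite{vielma2017extended}, is precisely what the authors do.

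One small comment on your caveat~(ii): you are right that the nonnegativity $v_0\ge 0$ (and $w_j\ge 0$) is not forced by the bare inequalities $v_j^2\le w_jv_0$ and $\sum_j w_j\le v_0$ read over $\Real^{2m+1}$. Indeed, $(v_0,\v,\w)=(-1,\mathbf{0},-\tfrac{1}{m}\e)$ satisfies both, yet projects to $(-1,\mathbf{0})\notin\mathcal{K}^m$. So the equality as literally written requires the rotated-cone reading you adopt; it is worth saying this explicitly rather than leaving it as an implicit convention.
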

Theorem \ref{cor1} gives another decomposition of $\KC^m$ by using an auxiliary vector $\w \in \Real^m$.
\begin{CORO}\label{coro1}
	A second-order cone $\mathcal{K}^m$ can be also written as 
	\begin{equation*}
	\mathcal{K}^m:=\left\{(v_0,\v)\in\Real^{(m+1)}:\exists \w \in \Real^m \ \textrm{such that} \ \v_j^2\leq w_j v_0,\forall j\in\{1,\ldots,m\},\sum_{j=1}^{m}w_j\leq v_0\right\}.
	\end{equation*}
\end{CORO}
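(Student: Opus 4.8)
The plan is to read off Corollary~\ref{coro1} directly from Theorem~\ref{cor1}: the set displayed on the right-hand side of the corollary is, by definition, the image of $\hat{\H}^m$ under the coordinate projection $\mathrm{Proj}_{(v_0,\v)}$, so the identity is obtained merely by unwinding what ``orthogonal projection onto the $(v_0,\v)$-space'' means. Concretely, I would observe that $(v_0,\v)\in\mathrm{Proj}_{(v_0,\v)}(\hat{\H}^m)$ holds exactly when there exists a completion $\w\in\Real^m$ with $(v_0,\v,\w)\in\hat{\H}^m$, and then substitute the defining inequalities of $\hat{\H}^m$ (namely $v_j^2\le w_jv_0$ for all $j\in\{1,\ldots,m\}$ and $\sum_{j=1}^{m}w_j\le v_0$) verbatim. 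Combined with the equality $\mathcal{K}^m=\mathrm{Proj}_{(v_0,\v)}(\hat{\H}^m)$ of Theorem~\ref{cor1}, this is the claim.

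Because Theorem~\ref{cor1} is imported from another source, I would also include a short self-contained verification, organized as two inclusions. For $\mathcal{K}^m\subseteq\{\,\cdot\,\}$: given $(v_0,\v)$ with $v_0\ge 0$ and $\|\v\|_2\le v_0$, I would exhibit an explicit witness $\w$. When $v_0>0$, take $w_j=v_j^2/v_0$; then each constraint $v_j^2\le w_jv_0$ holds with equality and $\sum_{j=1}^{m}w_j=\|\v\|_2^2/v_0\le v_0$. When $v_0=0$, the hypothesis forces $\v=\0$, and $\w=\0$ works. For the reverse inclusion $\{\,\cdot\,\}\subseteq\mathcal{K}^m$: given $\w$ with $v_j^2\le w_jv_0$ for all $j$ and $\sum_{j=1}^{m}w_j\le v_0$, I would sum the $m$ inequalities to get $\|\v\|_2^2=\sum_{j=1}^{m}v_j^2\le v_0\sum_{j=1}^{m}w_j\le v_0^2$, using $v_0\ge 0$ in the last step, and then take square roots to conclude $\|\v\|_2\le v_0$.

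The only genuine subtlety, and the place where a naive argument slips, is the degenerate case $v_0=0$, where the natural choice $w_j=v_j^2/v_0$ is undefined; this is handled by noting that $v_0=0$ together with membership in $\mathcal{K}^m$ forces $\v=\0$, so the zero vector is an admissible witness. A second point worth making explicit is that the sign condition $v_0\ge 0$ is part of the description of $\mathcal{K}^m$ and is exactly what legitimizes the step $v_0\sum_{j}w_j\le v_0^2$ in the reverse inclusion; when the corollary is deduced straight from Theorem~\ref{cor1}, this is automatic, since the theorem already asserts that the projection coincides with $\mathcal{K}^m$. Apart from these bookkeeping remarks, the argument is routine.
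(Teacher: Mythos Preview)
Your proposal is correct, and your first paragraph is precisely the paper's treatment: the corollary is presented immediately after Theorem~\ref{cor1} without a separate proof, the one-line justification being that the right-hand side is nothing but $\mathrm{Proj}_{(v_0,\v)}(\hat{\H}^m)$ with the existential quantifier on $\w$ written out. Your additional self-contained verification of the two inclusions, and your remarks on the degenerate case $v_0=0$ and on the role of the sign condition $v_0\ge 0$, go beyond what the paper supplies.
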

The utilization of Corollary \ref{coro1} leads to another reformulation of our OCS (3) as follows:
\begin{equation}\label{eq:cdm}
\begin{array}{ll}
\mbox{maximize} & :  \frac{\g^T\y}{N} \\[0.3em]
\mbox{subject to} & :  \e^T\y=N,\\
& \z= \U^T \y \ \text{for}\ i=1,\ldots,m \\
& z_i^2 \le w_i c_0 \ \text{for}\ i=1,\ldots,m ,\\[0.2em]
&  \sum_{i=1}^{m} w_i \le c_0,\\[0.2em]
&  y_i\in\{0,1\} \ \text{for}\ i=1,\ldots,m
\end{array}
\end{equation} 
where $z_i$ is the $i$th element of $\z$ and $c_0 = \sqrt{2\theta N^2}$. In this new formulation, the decision variables are $\y, \z$, and $\w$. 

The nonlinear constraint in \eqref{eq:cdm} is only the quadratic constraint $z_i^2 \le w_i c_0$. In the proposed CDM, we generate the cutting planes to these quadratic cones.
The framework of the proposed CDM is given as Algorithm~\ref{algo:cdm}.

\begin{ALGO}\label{algo:cdm} {\upshape A framework for the cone decomposition method.
\begin{itemize}
    \item [Step 1]
    Set a threshold $\delta \ge 0$, for example $\delta = 10^{-8}$.
    \item [Step 2]
    Let $P^0$ be an MI-LP problem that is generated from an optimization problem (\ref{eq:cdm}) by omitting the quadratic constraints $z_i^2 \le w_i c_0 \ (i=1, \ldots, m)$.
    Apply an MI-LP solver to $P^0$, and let its optimal solution be
     $\left(\hat{\y}^0, \hat{\z}^0, \hat{\w}^0\right)$. Let $k=0$.
     \item [Step 3]
     Let a set of generated cuts $\CC^k = \emptyset$.
 \item[Step 4]
For each $i = 1, \ldots, m$, if $(\hat{z}_i^k)^2 \le \hat{w}_i^k c_0$ is violated, apply the following steps.
\begin{itemize}
\item[Step 4-1] Compute the orthogonal projection of $(\hat{z}_i^k, \hat{w}_i^k)$ onto $z_i^2 \le w_i c_0$ by solving the following subproblem with the Lagrangian multiplier method.
\begin{equation*}
\begin{array}{ll}
\mbox{minimize} &: \frac{1}{2}\left(\bar{z}-\hat{z}_i^k\right)^2+\frac{1}{2}\left(\bar{w}-\hat{w}_i^k\right)^2\\
\mbox{subject to} &: \bar{z}^2\le \bar{w} c_0
\end{array}
\end{equation*}
Let $(\bar{z}_i^k,\bar{w}_i^k)$ be the solution of this subproblem.
\item[Step 4-2] 
Add to $\CC^k$ the following linear constraint 
\begin{equation*}
\left(\begin{array}{c}
\hat{z}_i^k - \bar{z}_i^k \\ \hat{w}_i^k - \bar{w}_i^k 
\end{array} \right)^T
\left(\begin{array}{c}
z_i - \bar{z}_i^k \\ w_i - \bar{w}_i^k 
\end{array} \right) \le 0.
\end{equation*}
\end{itemize}
\item [Step 5]
If $\CC^k$ is empty, output $\hat{\y}^k$ as the solution and terminate.
\item [Step 6]
Build a new MI-LP $P^{k+1}$ by adding $\CC^k$ to $P^k$.
Let the optimal solution of $P^{k+1}$ be $\left(\hat{\y}^{k+1}, \hat{\z}^{k+1}, \hat{\w}^{k+1}\right)$.
If 
\begin{equation}
||\hat{\z}^{k+1}-\hat{\z}^k|| \le \delta \ \mbox{and} \ ||\hat{\w}^{k+1}-\hat{\w}^k|| \le \delta,
\end{equation}
output $\hat{\y}^k$ as the solution and terminate.
\item [Step 7]
Return to Step 3 with $k \leftarrow k+1$.
\end{itemize}
} 
\end{ALGO}

In Step~4-1 of Algorithm~\ref{algo:cdm}, we compute the orthogonal projection. It would be desirable to compute the orthogonal projection on the original quadratic constraint $\x^T \A \x \le 2 \theta$, such orthogonal projection does not have an analytic form. Kiseliov~\cite{kiseliov1994algorithms} proposed
some numerical method, but this is an iterative method. Another iterative method is also proposed by~\cite{ferreira2018project} to solve different case of second-order cones. In contrast, the orthogonal projection in Step~4-1 is onto the quadratic constraint $\bar{z}^2 \le \bar{w} c_0$. We can derive the analytical form, as proven in the next theorem. Note that the decomposition in \eqref{eq:cdm} enables us to derive this theorem.
\begin{THEO}\label{theorem:cdm}
Assume that $(\hat{z}, \hat{w}) \in \Real^2$ satisfies $\hat{z}^2 > \hat{w} c_0$. Let $(\bar{z}, \bar{w}) \in \Real^2$ be the orthogonal projection of $(\hat{z}, \hat{w})$ onto $z^2 \le w c_0$. Then, $(\bar{z}, \bar{w})$ can be given by an analytical form.
\end{THEO}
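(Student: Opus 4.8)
The plan is to exhibit the projection as the unique KKT point of the convex subproblem in Step~4-1 and to reduce the KKT system to a single cubic equation whose admissible root is given by Cardano's formula. First I would record that $c_0=\sqrt{2\theta N^2}>0$, so the feasible set $C:=\{(z,w)\in\Real^2 : z^2\le wc_0\}=\{(z,w):w\ge z^2/c_0\}$ is a nonempty closed convex region (the epigraph of the convex function $z\mapsto z^2/c_0$). Hence the Euclidean projection $(\bar z,\bar w)$ of $(\hat z,\hat w)$ onto $C$ exists and is unique, and since $(\hat z,\hat w)\notin C$ it must lie on the boundary curve $\{z^2=wc_0\}$.

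Next, since Slater's condition holds for $C$, the point $(\bar z,\bar w)$ is characterized by the KKT conditions: with Lagrangian $\LC(z,w,\lambda)=\tfrac12(z-\hat z)^2+\tfrac12(w-\hat w)^2+\lambda(z^2-wc_0)$ and multiplier $\lambda\ge0$, stationarity gives $(1+2\lambda)\bar z=\hat z$ and $\bar w=\hat w+\lambda c_0$, while infeasibility of $(\hat z,\hat w)$ forces $\lambda>0$ together with $\bar z^2=\bar w c_0$. Substituting $\bar z=\hat z/(1+2\lambda)$ and $\bar w=\hat w+\lambda c_0$ into $\bar z^2=\bar w c_0$ and clearing the denominator yields the cubic
\[
\phi(\lambda):=4c_0^2\lambda^3+4c_0(\hat w+c_0)\lambda^2+c_0(4\hat w+c_0)\lambda+(c_0\hat w-\hat z^2)=0 .
\]
(Equivalently, one may substitute $w=z^2/c_0$ on the boundary and obtain a cubic directly in $z$; I would present whichever is cleaner.) I would then show $\phi$ has exactly one root $\lambda^\star\in(0,\infty)$: we have $\phi(0)=c_0\hat w-\hat z^2<0$ by the standing hypothesis and $\phi(\lambda)\to+\infty$; moreover on the subinterval of $[0,\infty)$ where $\hat w+\lambda c_0\ge0$ the map $\lambda\mapsto c_0(\hat w+\lambda c_0)(1+2\lambda)^2$ is a product of nonnegative nondecreasing factors, hence strictly increasing, whereas on the complementary part (relevant only if $\hat w<0$) $\phi$ stays strictly negative; the degenerate case $\hat z=0$ gives $\hat w<0$ and directly $(\bar z,\bar w)=(0,0)$, i.e.\ $\lambda^\star=-\hat w/c_0$. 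Finally, being the unique positive root of a cubic, $\lambda^\star$ is expressible in closed form by Cardano's formula (in its trigonometric form when the discriminant is nonnegative, picking the unique positive member of the three real roots), and then $(\bar z,\bar w)=\bigl(\hat z/(1+2\lambda^\star),\ \hat w+\lambda^\star c_0\bigr)$ is the claimed analytical expression.

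The main obstacle is this last structural point rather than any computation: a cubic may have up to three real roots, so the proof must single out which one is the projection. This is precisely where the monotonicity-and-sign analysis of $\phi$ on $[0,\infty)$ is needed, ensuring that exactly one root is admissible (that is, $\lambda>0$, which also automatically delivers $\bar w c_0=\hat z^2/(1+2\lambda)^2\ge0$ so that $\bar w$ is consistent with the boundary equation). Everything else—expanding $\phi$, reducing it to depressed form, writing out Cardano's formula, and back-substituting to get $\bar z,\bar w$—is routine algebra that I would relegate to the verification.
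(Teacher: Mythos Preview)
Your proposal is correct and follows essentially the same route as the paper: reduce to a boundary problem, write the Lagrangian, derive the identical cubic $4c_0^2\lambda^3+(4c_0^2+4c_0\hat w)\lambda^2+(c_0^2+4c_0\hat w)\lambda+(c_0\hat w-\hat z^2)=0$, and appeal to Cardano's formula before back-substituting for $(\bar z,\bar w)$. Your handling of root selection---showing via the sign and monotonicity of $\phi$ on $[0,\infty)$ that exactly one root is positive and hence admissible as a KKT multiplier---is in fact more careful than the paper's argument, which asserts that the remaining two Cardano roots are always complex (an assertion that can fail, e.g.\ for large $\hat w>0$, where the cubic has three real roots but still only one positive one).
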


In the proof of Theorem \ref{theorem:cdm}, we make use of Cardano's  Formula \cite{witula2010cardano} to obtain a root of a cubic function analytically.
\begin{THEO}\label{theorem:cardano}{\upshape [Cardano's Formula \cite{witula2010cardano}]}
	{Let $F(\lambda)$ be a cubic function $ F(\lambda) = a\lambda^3+b\lambda^2+c\lambda+d$ with $a\ne0$. Then $F(\lambda) = 0$ has three solutions}
	\begin{eqnarray*}
	\left\{\begin{array}{lcl}
	\lambda_1 &=& S+T-\frac{b}{3a} \\
	\lambda_2 &=& -\frac{S+T}{2}-\frac{b}{3a}+\frac{i\sqrt{3}}{2}(S-T)\\
	\lambda_3 &=& -\frac{S+T}{2}-\frac{b}{3a}-\frac{i\sqrt{3}}{2}(S-T),
	\end{array}\right.
	\end{eqnarray*}
	where
	\begin{eqnarray*}
	S = \sqrt[3]{R+\sqrt{Q^3+R^2}}, \quad T = \sqrt[3]{R-\sqrt{Q^3+R^2}}, \quad
	Q =\frac{3ac-b^2}{9a^2}, \ \mbox{\upshape and} \   R=\frac{9abc-27a^2d-2b^3}{54a^3}.
	\end{eqnarray*}
\end{THEO}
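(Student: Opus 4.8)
The plan is to prove Cardano's Formula by the classical reduction-and-substitution argument, deriving the three stated expressions rather than merely verifying them. First I would normalize and depress the cubic. Dividing $F(\lambda)=0$ by $a\neq 0$ and applying the shift $\lambda = t - \frac{b}{3a}$ eliminates the quadratic term, yielding a depressed cubic $t^3 + 3Q\,t - 2R = 0$. A routine expansion confirms that its coefficients are exactly $p = \frac{3ac-b^2}{3a^2} = 3Q$ and $q = \frac{2b^3 - 9abc + 27a^2 d}{27a^3} = -2R$, so the quantities $Q$ and $R$ in the statement arise naturally from this step, while the back-substitution $\lambda = t - \frac{b}{3a}$ accounts for the common term $-\frac{b}{3a}$ appearing in all three formulas.

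Next I would introduce the Cardano ansatz $t = u + v$. Using the identity $(u+v)^3 = u^3 + v^3 + 3uv(u+v)$, the depressed cubic becomes $t^3 - 3uv\,t - (u^3+v^3) = 0$, and matching coefficients with $t^3 + 3Q\,t - 2R = 0$ imposes the two conditions $uv = -Q$ and $u^3 + v^3 = 2R$. Cubing the first condition gives $u^3 v^3 = -Q^3$, so $u^3$ and $v^3$ are the two roots of the resolvent quadratic $\xi^2 - 2R\,\xi - Q^3 = 0$. The quadratic formula then yields $u^3 = R + \sqrt{Q^3 + R^2}$ and $v^3 = R - \sqrt{Q^3 + R^2}$, which are precisely $S^3$ and $T^3$; hence one admissible pair of cube roots is $u = S$, $v = T$.

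Finally I would recover all three roots using the cube roots of unity. Writing $\omega = -\frac12 + \frac{i\sqrt3}{2}$, the three cube roots of $u^3$ are $S, \omega S, \omega^2 S$, and for each choice the value of $v$ is forced by the product constraint $uv = -Q$. Since $\omega^3 = 1$ and the cube roots are chosen so that $ST = -Q$, the only compatible pairings are $(S,T)$, $(\omega S, \omega^2 T)$, and $(\omega^2 S, \omega T)$. Substituting $\omega$ and $\omega^2 = -\frac12 - \frac{i\sqrt3}{2}$ into $t = u+v$ gives $t_1 = S+T$, $t_2 = -\frac{S+T}{2} + \frac{i\sqrt3}{2}(S-T)$, and $t_3 = -\frac{S+T}{2} - \frac{i\sqrt3}{2}(S-T)$; undoing the shift $\lambda = t - \frac{b}{3a}$ then produces exactly the three stated formulas.

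The main obstacle is the correct bookkeeping of cube roots in the last step. Each of $u^3$ and $v^3$ has three complex cube roots, giving nine formal combinations $u+v$, but only three are genuine roots of the cubic. The decisive point is enforcing the compatibility constraint $uv = -Q$ (equivalently $ST = -Q$, which follows from $(ST)^3 = u^3 v^3 = -Q^3$), since this selects the correct three pairings and discards the spurious ones; I would make this selection explicit and note that when $Q^3 + R^2 \ge 0$ the real cube roots $S,T$ automatically satisfy $ST = -Q$, while otherwise $S$ and $T$ must be taken as a conjugate-compatible pair. Verifying that the three selected pairings indeed satisfy both $uv = -Q$ and $u^3 + v^3 = 2R$, and hence solve the depressed cubic, completes the argument.
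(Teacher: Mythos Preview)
Your derivation is the standard classical proof of Cardano's formula and is correct; the depression step, the resolvent quadratic, and the pairing via cube roots of unity are all handled properly, including the essential compatibility condition $uv=-Q$ that filters out the spurious combinations. Note, however, that the paper does not actually prove this theorem: it is stated with a citation to \cite{witula2010cardano} and used as a black-box tool inside the proof of Theorem~\ref{theorem:cdm}, so there is no ``paper's own proof'' to compare against. Your write-up would serve perfectly well as a self-contained justification if one wished to include it.
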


\begin{proof} (for Theorem~\ref{theorem:cdm})

The orthogonal projection $(\bar{z}, \bar{w}) \in \Real^2$ is the optimal solution of the following subproblem.
\begin{equation}
\begin{array}{ll}
\mbox{minimize} &: \frac{1}{2}\left(z-\hat{z} \right)^2+\frac{1}{2}\left(w -\hat{w} \right)^2\\
\mbox{subject to} &: z^2\le w c_0.
\end{array}
\label{eq:theo}
\end{equation}
This problem has a convex closed feasible region and its objective function is strongly convex, therefore, this problem has a unique solution.
Since $(\hat{z}, \hat{w})$ is outside of the region $z^2 \le w c_0$,
the projection exists on the boundary of the region. We can replace $z^2\le w c_0$ with $z^2 = w c_0$, and \eqref{eq:theo} is equivalent to the following optimization problem:
\begin{equation}
\begin{array}{ll}
\mbox{minimize} &: \frac{1}{2}\left(z-\hat{z} \right)^2+\frac{1}{2}\left(w -\hat{w} \right)^2\\
\mbox{subject to} &: z^2 = w c_0
\end{array}
\label{eq:theo2}
\end{equation}

To apply a Lagrangian multiplier method, we prepare a Lagrangian function of \eqref{eq:theo2} with a Lagrangian multiplier $\lambda \in \Real$:
\begin{equation*}
\mathcal{L}(z, w, \lambda)=\frac{1}{2}(z-\hat{z})^2+\frac{1}{2}(w-\hat{w})^2-\lambda(w c_0-z^2).
\end{equation*}
Setting $\nabla\mathcal{L}=0$, we have
\begin{align}
\nabla_{z}\mathcal{L} &= z-\hat{z}+2\lambda c_0 = 0,\label{eq:4} \\
\nabla_{w}\mathcal{L} &= w-\hat{w}-\lambda c_0 = 0,\label{eq:5} \\
\nabla_{\lambda}\mathcal{L} &= -c_0w+z^2=0. \label{eq:6} 
\end{align}
Substituting \eqref{eq:4} and \eqref{eq:5} into \eqref{eq:6} leads to a cubic function with respect to $\lambda$:
\begin{equation}
4c_0^2\lambda^3+(4c_0^2+4c_0\hat{w})\lambda^2+(c_0^2+4c_0\hat{w})\lambda+(c_0\hat{w}-(\hat{z})^2)=0 \label{eq:7}
\end{equation}

Defining $a = 4c_0^2,\ b= 4c_0^2+4z_0\hat{w},\ c=c_0^2+4z_0\hat{w},\text{and}\ d=c_0\hat{w}-\hat{z}^2$, we apply Theorem~\ref{theorem:cardano} to obtain $\lambda$. In Theorem~\ref{theorem:cardano}, we have three solutions $\lambda_1, \lambda_2, \lambda_3$. Among three solutions, only $\lambda_1$ can generate the analytical solution since the other two are complex numbers. 
To prove that $\lambda_2$ and $\lambda_3$ are complex numbers, it is enough to show $S \neq T$, and this is equivalent to show $Q^3+R^2\neq0$.
Computing
\begin{align*}
Q^3+R^2 &= \left(\frac{3ac-b^2}{9a^2}\right)^3+\left(\frac{9abc-27a^2d-2b^3}{54a^3} \right)^2\\[1em]
&= \frac{(3ac-b^2)^3}{729a^6}+ \frac{(-27 a^2 d + 9 a b c - 2 b^3)^2}{2916 a^6}\\[1em]
&= \frac{27 a^2 d^2 - 18 a b c d + 4 a c^3 + 4 b^3 d - b^2 c^2}{108 a^4},
\end{align*}
we substitute $a,b,c,d$ and $(\hat{z})^2\leq(\hat{w})^2$. Therefore, for $c_0=\sqrt{2\theta N^2}\ne0$ and $\hat{z},\hat{w}\in\{\Real\}\backslash\{\0\}$,  $\lambda_2$ and $\lambda_3$ are complex numbers. 

Thus, we only have the analytical solution by $\lambda_1$. After we obtain $\lambda$ as $\lambda_1$, it is easy to compute $z$ and $w$ by \eqref{eq:4} and \eqref{eq:5}. Therefore, the optimal solution $(\bar{z}, \bar{w})$ of \eqref{eq:theo} has a analtycial form.
\end{proof}

The termination of the proposed method is guaranteed by the following theorem.
\begin{THEO}
Algorithm \ref{algo:cdm} terminates in a finite number of iterations.
\end{THEO}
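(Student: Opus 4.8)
The plan is to argue by contradiction: assume Algorithm~\ref{algo:cdm} never meets the termination test in Step~5 or Step~6, and derive an impossibility from the finiteness of the integer part of the iterates together with a monotonicity (``budget'') estimate on the $\w$-variables. The starting observation is that the integer part of the iterates can take only finitely many values: the constraints $y_i\in\{0,1\}$ and $\e^T\y=N$ confine $\hat{\y}^k$ to the finite set $Y=\{\y\in\{0,1\}^m:\e^T\y=N\}$, and since $\z=\U^T\y$ is an equality constraint of every master problem $P^k$, the vector $\hat{\z}^k=\U^T\hat{\y}^k$ is a function of $\hat{\y}^k$ and so ranges over the finite set $\U^TY$. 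Hence, on an infinite run, some $\y^\star\in Y$, with $\z^\star:=\U^T\y^\star$, satisfies $(\hat{\y}^k,\hat{\z}^k)=(\y^\star,\z^\star)$ for infinitely many indices $k$; let $k_1<k_2<\cdots$ enumerate them.

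Next I would check that every cut is valid and strictly separating. By Theorem~\ref{theorem:cdm}, Step~4-1 returns the exact Euclidean projection $(\bar{z}_i^k,\bar{w}_i^k)$ of $(\hat{z}_i^k,\hat{w}_i^k)$ onto the closed convex set $C_i:=\{(z,w):z^2\le wc_0\}$, so the variational inequality for projections gives $(\hat{z}_i^k-\bar{z}_i^k,\hat{w}_i^k-\bar{w}_i^k)^T((z,w)-(\bar{z}_i^k,\bar{w}_i^k))\le 0$ for all $(z,w)\in C_i$; hence the cut added in Step~4-2 is valid for the feasible region of~\eqref{eq:cdm}. In particular each $P^k$ stays feasible (by hypothesis~\eqref{eq:MI-SOCP} is feasible, and feasible points survive every cut) and $\mathrm{val}(P^0)\ge\mathrm{val}(P^1)\ge\cdots$; since these values lie in the finite set $\{\g^T\y/N:\y\in Y\}$ they are eventually constant. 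Evaluating the cut at the iterate that produced it gives $\|(\hat{z}_i^k,\hat{w}_i^k)-(\bar{z}_i^k,\bar{w}_i^k)\|^2>0$ (the point lies outside $C_i$), so the cut strictly separates that iterate; consequently $(\hat{\z}^k,\hat{\w}^k)$ is infeasible for every later $P^{k'}$, and along the subsequence $(k_j)$ the $\w$-parts $\hat{\w}^{k_1},\hat{\w}^{k_2},\dots$ are pairwise distinct.

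The crux is a monotone budget argument along $(k_j)$. Put $h_i:=(z_i^\star)^2/c_0$, the height of $\partial C_i$ over $z_i=z_i^\star$. The projection geometry supplies two facts: projecting an infeasible point onto $C_i$ never decreases the $w$-coordinate (the outward normal to $\partial C_i$ points downward in $w$), and the boundary line of the resulting cut is the tangent to the parabola $z^2=wc_0$ at $(\bar{z}_i^k,\bar{w}_i^k)$, which meets $z_i=z_i^\star$ at height $\bar{z}_i^k(2z_i^\star-\bar{z}_i^k)/c_0=h_i-(z_i^\star-\bar{z}_i^k)^2/c_0\le h_i$. Letting $B_i$ be the largest lower bound that the cuts generated so far impose on $w_i$ along $z_i=z_i^\star$, this shows $B_i\le h_i$ throughout, that $B_i$ strictly increases each time coordinate $i$ is cut while $\hat{\y}^k=\y^\star$, and that $\hat{\w}^{k_j}$ obeys $w_i\ge B_i$ for all $i$ together with $\sum_iw_i\le c_0$, so $\sum_iB_i\le c_0$ at every stage. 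If $\y^\star$ is infeasible for~\eqref{eq:cdm}, then $\sum_ih_i>c_0$, so at each $k_j$ the total violation $\sum_i(h_i-\hat{w}_i^{k_j})_+\ge\sum_ih_i-c_0>0$, forcing some coordinate to be cut with a violation bounded below independently of $j$; by the explicit form of the cut this produces a positive increase $\nu>0$ (depending only on $\y^\star$) of $\sum_iB_i$ at every $k_j$, contradicting $\sum_iB_i\le c_0$. If $\y^\star$ is feasible for~\eqref{eq:cdm}, it is optimal, the bounded monotone quantities $B_i$ converge, their limits must equal $h_i$ on every coordinate cut infinitely often (a strictly smaller limit would be pushed further by the next projection), the per-iteration cut steps tend to zero, and one concludes that the Step~5 test, or --- once $\y^\star$ recurs on consecutive iterations --- the $\delta$-test of Step~6, is eventually satisfied, which is the desired contradiction.

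I expect the feasible-$\y^\star$ case to be the real obstacle. The ``budget'' $\sum_iB_i\le c_0$ cannot grow without bound, which settles the infeasible case at once; but in the feasible case the per-iteration violation is only known to tend to zero rather than to stay bounded away from it, so turning ``vanishing cut steps'' into actual termination requires combining the convergence $B_i\to h_i$ with the explicit tangent-line form of the cut and, when $\delta>0$, the tolerance built into Step~6 --- and, in particular, ruling out an infinite orbit among the finitely many optimal integer solutions while the continuous $\w$-component keeps drifting. Making that last implication precise is where I expect the proof to demand the most care.
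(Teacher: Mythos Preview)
Your route differs entirely from the paper's. The paper's proof is three lines: there are at most $2^m$ binary vectors $\y$; each iteration's cuts strictly separate $(\hat{\z}^k,\hat{\w}^k)$; and since $\z=\U^T\y$ with $\U$ invertible, the paper concludes that $\hat{\y}^k$ itself is infeasible for $P^{k+1}$, bounding the iteration count by $2^m$. There is no budget argument, no feasible/infeasible case split, no tangent-line geometry.

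Your feasible-$\y^\star$ branch is a genuine gap, and you say as much: once the recurring integer point $\y^\star$ is feasible for \eqref{eq:cdm}, the per-cut increments in the $B_i$ need not be bounded away from zero, the ``budget'' contradiction evaporates, and you never actually show that Step~5 or the $\delta$-test of Step~6 fires in finitely many steps --- ``I expect the proof to demand the most care'' is an acknowledgment of a hole, not a proof. (Even your infeasible case is sketchy on the point where some $B_i$ are still $-\infty$ and $\sum_iB_i\le c_0$ is vacuous.) In fairness, you have put your finger on a soft spot that the paper's short argument skates over: the cut in Step~4-2 separates only the pair $(\hat z_i^k,\hat w_i^k)$, while $\w$ is a continuous auxiliary variable absent from the objective, so nothing written in the paper rules out the solver returning the same $\hat\y^k$ (hence the same $\hat\z^k$) with a different $\w$ at the next iteration; the inference ``$\hat\y^k$ is not feasible in $P^{k+1}$'' is thus not justified as stated. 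Your longer argument is an attempt to repair precisely this, but as it stands it is not closed either.
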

\begin{proof}
The number of points we are interested for $\y$ is at most $2^m$,
where $m$ is the number of candidate genotypes,
due to the binary constraints $y_i \in \{0, 1\}$.
In $k$ iterations, the generated cuts in $\CC^k$ remove $\hat{\z}^k, \hat{\w}^k$.
Since $\hat{\y}^k$ is directly connected to $\hat{\z}^k$ by the constraint
$\z = \U^T \y$ and $\U$ is invertible, $\hat{\y}^k$ is not feasible in $P^{k+1}$.
At least one solution will be infeasible in each iteration, therefore, the number of iterations is also at most $2^m$.
\end{proof}
Since any feasible point is not excluded by the generated cuts,
we can find an optimal solution if the stopping threshold is $\delta = 0$.

\section{Numerical Results}\label{sec:4}
Numerical experiments were conducted to compare the performance of the proposed methods (LPP-ACSM and CDM) with existing software (\texttt{dsOpt} as implemented in \texttt{OPSEL}) and \texttt{GENCONT}, a general MI-SOCP solver \texttt{CPLEX}, and LPP itself. The proposed methods were implemented using Matlab R2017b by setting \texttt{CPLEX} as the solver of MI-LP. All methods were executed on a 64-bit Windows 10 PC with Xeon CPU E3-1231 (3.40 GHz) and 8 GB memory space. The data were taken from \url{https://doi.org/10.5061/dryad.9pn5m} or generated by the simulation POPSIM~\cite{mullin2010using}. The sizes of the test instances are $m$=200, 1050, 2045, 5050, 10100, and 15222. We set parameter $N=50,100$, and as a stopping criterion for \texttt{CPLEX}, we used $gap=1\%, 5\%$ . We also chose $\delta = 10^{-8}$. The computation time was limited to 3 hours for all methods.

First, Table~\ref{result:GENCONT} shows the results from the OCS solver \texttt{GENCONT}. In this table, the columns ``$\g^T \x$" and ``$\x^T \A \x$" are the obtained objective values and group coancestry, respectively. We only show the solution for $m \leq 5050$, since the results with $m=10100, 15222$ cannot be obtained due to out of memory. From Table \ref{result:GENCONT}, we observe that the number of chosen candidates did not match the given parameter $N$. This indicates that \texttt{GENCONT} failed to output feasible solutions.

\begin{table}[htb]
\begin{center}
\caption{Numerical results on 
\texttt{GENCONT}}\label{result:GENCONT}
{\small
\begin{tabular}{r r r r r r }
\hline
\multicolumn{6}{c}{$N=50$}\\
\hline
\multicolumn{1}{c}{$m$} & \multicolumn{1}{c}{$2\theta$} & \multicolumn{1}{c}{$\g^T\x$} & \multicolumn{1}{c}{$\x^T\A\x$} & \multicolumn{1}{c}{time (sec)} & \multicolumn{1}{c}{$\#$ selected $N$}\\
\hline
200 & 0.0334 & 11.472 & 0.03340 & 3.54 & 64\\ 
1050 & 0.0627 & 25.91 & 0.06270 & 7.20 & 81\\
2045 & 0.0711 & 438.36 & 0.07109 & 111.52 & 71\\
5050 & 0.1081 & 43.44 & 0.10810 & 1561.43 & 78\\
\hline
\multicolumn{6}{c}{$N=100$}\\
\hline
\multicolumn{1}{c}{$m$} & \multicolumn{1}{c}{$2\theta$} & \multicolumn{1}{c}{$\g^T\x$} & \multicolumn{1}{c}{$\x^T\A\x$} & \multicolumn{1}{c}{time (sec)} & \multicolumn{1}{c}{$\#$ selected $N$}\\
\hline
200 & 0.0258 & 8.89 & 0.02580 & 0.48& 93\\
1050 & 0.0539 & 24.07& 0.0539 & 4.77 & 94\\
2045 & 0.0628 & 432.75 & 0.06279 & 106.48& 74\\
5050 & 0.0994 & 42.08 & 0.09940 & 1533.31 & 81\\
\hline
\end{tabular}
}
\end{center}
\end{table}

The results for the other methods where $N = 50$  are presented in Table~\ref{table:result50}. For the LPP relaxation and its modification (LPP-ACSM), we fixed $\epsilon=0.005$ so that these two methods output feasible solutions. In addition, since only LPP and LPP-ASCM require the parameter $\epsilon$, we show the value of $\epsilon$ for only two methods in the column $(1+\epsilon)2\theta$. The other methods \texttt{CPLEX}, \texttt{dsOpt} and CDM do not need the parameter $\epsilon$,
and this is indicated by ``*" in the table. When the computation could not finish the computation within the time limit of 3 hours, it is indicated as '$>$ 3 hours' and the best objective values up to that point are shown in the table.

From Table \ref{table:result50}, LPP and LPP-ACSM failed to obtain the solution due to OOM (out of memory) for large problems $m \geq 2045$. To attain the feasibility, we set a relatively small $\epsilon$, but this demanded a huge number of linear constraints, as discussed in Section~\ref{sec:2}.

\begin{table}[htb]
\begin{center}
\caption{Numerical comparison for EDPs  ($N = 50$)}\label{table:result50}
{\small 
\begin{tabular}{l|c|c|c|r|c|r|r|c|r}
\hline
\multirow{2}{*}{Algorithm}& \multirow{2}{*}{$m$} & \multirow{2}{*}{$2 \theta$} & \multirow{2}{*}{$(1+\epsilon)2\theta$}& \multicolumn{3}{c|}{gap $=5\%$}& \multicolumn{3}{c}{gap $=1\%$}\\
\cline{5-7}\cline{8-10}
& & & & $\g^T \x$ & $\x^T \A \x$  &  time (sec) & $\g^T \x$ & $\x^T \A \x$  &  time (sec) \\
\hline

\texttt{CPLEX} & \multirow{5}{*}{200} & \multirow{5}{*}{0.0334} & * & 24.99 & 0.03340 & 1.06 & 25.19 & 0.03340 & 8735.24\\
\texttt{dsOpt} & &  & * &  25.12 & 0.03340 & 5.32  & 25.18 & 0.03340 & 606.94 \\
LPP &  &  & 0.03373 & 24.83 & 0.03340 & 26.81 & 25.11 & 0.03340 &  3691.26 \\
LPP-ACSM &  &  & 0.03373 & 24.84 & 0.03340 & 47.75 & 25.15 & 0.03340 &   2587.16\\
CDM & &  & * &  25.02 &  0.03340 &  2.37 & 25.15 & 0.03340 & 9.89\\
\hline

\texttt{CPLEX} & \multirow{5}{*}{1050} & \multirow{5}{*}{0.0627} & * &  24.97 & 0.06267 & 3.56& 24.97 & 0.06267 & 6.64\\
\texttt{dsOpt} & &  &* &  24.97 & 0.06169 & 5.19 & 24.85 & 0.06268 & $>$ 3 hours \\ 
LPP &  &  & 0.06333 & 24.54 & 0.06129 & 976.54 & 24.89 & 0.06291 &   10063.39 \\
LPP-ACSM &  &  & 0.06333 & 24.72 & 0.06215 & 1230.01 & 24.89 & 0.06291 & 1634.58  \\
CDM &  &  & * &  24.65 & 0.06118 &  8.67 & 24.96 & 0.06238 & 12.83\\
\hline

\texttt{CPLEX} & \multirow{5}{*}{2045} & \multirow{5}{*}{0.0711} & * & 437.21 & 0.07100 & 3.95& 437.21 & 0.07100 & 3.83\\
\texttt{dsOpt} &  &  & * & 432.94 & 0.06700 & 7.09& 435.87 & 0.07020 & 14.42\\
LPP &  &  & 0.07181 & &  & OOM & &&   OOM\\
LPP-ACSM &  &  & 0.07181 &  &  & OOM &  &  & OOM\\
CDM &  &  & * &  434.26 & 0.06760 & 1.80 & 437.38 & 0.06960 & 2.61\\
\hline

\texttt{CPLEX} & \multirow{5}{*}{5050} & \multirow{5}{*}{0.1081} & * & 41.90 & 0.10776 & 73.16& 42.57 & 0.10781 & $>$ 3 hours \\ 
\texttt{dsOpt} &  &  & * &  41.57 & 0.10471 & 236.70& 42.67 & 0.10807 & $>$ 3 hours \\ 
LPP &  &  & 0.109184 & &  & OOM & &&   OOM\\
LPP-ACSM &  &  & 0.109184 & &  & OOM & &&   OOM\\
CDM &  &  & * & 42.56  & 0.10742 & 171.85 & 42.56 & 0.10742 & 179.05 \\
\hline

\texttt{CPLEX} & \multirow{5}{*}{10100} & \multirow{5}{*}{0.0701}  & * & 44.89 & 0.06931 & $>$ 3 hours 
& 44.89 & 0.06931 & $>$ 3 hours \\ 
\texttt{dsOpt} &  &  & * &  46.00 & 0.07005 & 4509.83& 46.21 & 0.06975 & 8787.37 \\
LPP &  &  & 0.070803 & &  & OOM & &&   OOM\\
LPP-ACSM &  &  & 0.070803 & &  & OOM & &&   OOM\\
CDM &  &  & * &  45.27 & 0.06896 & 1131.14 & 46.43 & 0.07005 & 1431.12 \\
\hline

\texttt{CPLEX} & \multirow{5}{*}{15222} & \multirow{5}{*}{0.0388} & * &  118.33 & 0.03840 & $>$ 3 hours 
& 107.56 & 0.03280 & $>$ 3 hours \\ 
\texttt{dsOpt} &  &  & * &   &  & OOM &  &  & OOM \\
LPP &  &  & 0.039189 & &  & OOM & &&   OOM\\
LPP-ACSM &  &  & 0.039189 & &  & OOM & &&   OOM\\
CDM &  &  & * &  452.57 & 0.03880 & 493.85 & 461.83 & 0.03880 & 1111.49\\
\hline
\end{tabular}
} 
\end{center}
\end{table}

In contrast to LPP and LPP-ACSM, \texttt{CPLEX} shows its computation efficiency when gap $=5\%$. However, for larger problems or smaller gaps, \texttt{CPLEX} consumes more time than other methods. For example, we can see a large time difference for the smallest size $m=200$. \texttt{CPLEX} for gap$=5\%$ is the most efficient method among the five methods, but it becomes the slowest method when we set the gap as $1\%$. For such a tight gap, our proposed approach CDM can reduce computation time to less than $10$ seconds. In addition, for $m = 15222$, \texttt{CPLEX} could not finish its computation within the time limit (3 hours), and the best objective value at 3 hours was much worse than CDM; while CDM obtained $\g^T\x=452.57$, \texttt{CPLEX} only reached $\g^T\x=118.33$.

Table \ref{tab:result100} shows the results for all methods, except \texttt{GENCONT}, for the case of $N=100$. Similar to the results in Table~\ref{table:result50}, LPP and LPP-ACSM failed to solve large problems due to insufficient memory. From both tables, \texttt{dsOpt} actually gives similar performance to \texttt{CPLEX}. However, for the largest problem ($m = 15222$), \texttt{dsOpt} failed with "out of memory". In contrast to the other methods, CDM obtains feasible solutions without having a memory problem. Thus, CDM not only reduces the computation time, but also the memory usage. Based on these observation, CDM is the most effective method to solve OCS problem.

\begin{table}[htb]
\begin{center}
\caption{Numerical comparison for EDPs ($N = 100$)}\label{tab:result100}
{\small 
\begin{tabular}{l|r|c|c|r|c|r|r|c|r}
\hline
\multirow{2}{*}{Algorithm}& \multirow{2}{*}{$m$} & \multirow{2}{*}{$2 \theta$} & \multirow{2}{*}{$(1+\epsilon)2\theta$}& \multicolumn{3}{c|}{gap $=5\%$}& \multicolumn{3}{c}{gap $=1\%$}\\
\cline{5-7}\cline{8-10}
& & & & $\g^T \x$ & $\x^T \A \x$  &  time (sec) & $\g^T \x$ & $\x^T \A \x$  &  time (sec) \\
\hline

\texttt{CPLEX} & \multirow{5}{*}{200} & \multirow{5}{*}{0.0258} & * & 23.19  & 0.02580 & 4.31  & 23.49 & 0.02580 & 13.14\\
\texttt{dsOpt} & &  & * &  23.14 & 0.02575 & 1.30 & 23.54 & 0.02580 & 566.89\\
LPP &  &  & 0.026059 &  23.38 & 0.02585 & 10.60  & 23.53 & 0.02585 & 2289.65\\
LPP-ACSM &  &  & 0.026059 & 23.24 & 0.02580 &  11.35 & 23.58 & 0.02585 & 4003.32\\
CDM & &  & * &  23.53 &  0.02580 &  1.63 & 23.55 & 0.02580 & 1.89\\
\hline

\texttt{CPLEX} & \multirow{5}{*}{1050} & \multirow{5}{*}{0.0539} & * & 22.53 & 0.05389 & 6.68  & 22.53 & 0.05389 & 3.64\\
\texttt{dsOpt} & &  & * & 21.79 & 0.05358 & 6.07& 22.25 & 0.05382 & 193.08\\
LPP &  &  & 0.054440 & 22.35 & 0.05401 & 1047.81 & 22.35 & 0.05401 & 1088.16  \\
LPP-ACSM &  &  & 0.054440 & 22.34 & 0.05392  & 1019.46   &  &  &  OOM\\
CDM &  &  & * &  22.49 & 0.05339 &  14.78& 22.49 & 0.05339 & 14.51\\
\hline

\texttt{CPLEX} & \multirow{5}{*}{2045} & \multirow{5}{*}{0.0628} & * & 420.04  & 0.06100  & 3.21 & 420.04 & 0.06100 & 3.08\\
\texttt{dsOpt} &  &  & * &  419.53 & 0.06155 & 7.93& 419.53 & 0.06155 & 7.96\\
LPP &  &  & 0.063429 & &  &   OOM & & &  OOM \\
LPP-ACSM &  &  & 0.063429 &   &  &  OOM&  &  &  OOM\\
CDM &  &  & * & 418.67 & 0.06010 & 2.54 &  418.67 & 0.06010 & 2.42\\
\hline

\texttt{CPLEX} & \multirow{5}{*}{5050} & \multirow{5}{*}{0.0994} & * & 40.63 & 0.09932 & 58.37  & 40.63 & 0.09932 & 54.43    \\
\texttt{dsOpt} &  &  & * & 40.13 & 0.09860 & 134.55& 40.47 & 0.09936 & 367.29\\
LPP &  &  & 0.100498 & &  &   OOM & & &  OOM \\
LPP-ACSM  &  &  & 0.100498 & & &   OOM & & &  OOM \\
CDM &  &  & * & 40.28  & 0.09821 & 168.21 &  40.35 & 0.09742 & 183.25 \\
\hline

\texttt{CPLEX} & \multirow{5}{*}{10100} & \multirow{5}{*}{0.0610} & * & 43.79  & 0.06059 & 2720.18 & 44.34 & 0.06070  & $>$ 3 hours \\ 
\texttt{dsOpt} &  &  & * &  43.36 & 0.06018 & 584.77& 44.44 & 0.06100 & 7538.99\\
LPP &  &  & 0.061611 & & &   OOM & &  & OOM \\
LPP-ACSM  &  &  & 0.061611 & & &   OOM & & &  OOM \\
CDM &  &  & * &  43.86 & 0.06095 & 1003.68 & 44.53 & 0.06092 & 1269.18 \\
\hline
\texttt{CPLEX} & \multirow{5}{*}{15222} & \multirow{5}{*}{0.0300} & * & 436.92  & 0.02990  & 5084.69 & 436.92 & 0.02990 & $>$ 3 hours \\ 
\texttt{dsOpt} &  &  & * &   &  & OOM &  &  & OOM\\
LPP &  &  & 0.030301 & &  &   OOM & & &  OOM \\
LPP-ACSM  &  &  & 0.030301 & &  &   OOM & & &  OOM \\
CDM &   & &*& 432.13 & 0.02865 & 654.64  & 439.88 & 0.02960 & 641.76\\
\hline
\end{tabular}
} 
\end{center}
\end{table}

Finally, Table~\ref{tab:iter-cdm} presents additional evidence supporting the efficiency of the CDM. The column ``\# iteration" is the number of main iterations to obtain the output in the CDM, and the column ``\# constraint" shows the numbers of constraints of MI-LP problems in the first iteration and the last iteration. For example, $76$ and $2357$ in the column ``\# constraint" indicate that the MI-LP problems have 76 rows and 2357 rows in the first iteration and the last iteration, respectively. (See the problem with $N=50$ and $m=200$). This implies that $2357-76=2281$ constraints are added during the CDM. Compared to the rows in LPP and LPP-ACSM of Table~\ref{table:nnz}, the CDM requires considerably fewer MI-LP problems, and this makes the CDM much faster.
From the viewpoint of memory consumption, when we applied the CDM to the problem with $m=15.222$, $N=50$, and gap=5\%, the first MI-LP required 5.4 GB memory space, but the last MI-LP consumed only 2.4 GB memory space. During the CDM iterations, we add linear constraints to MI-LP problems, such that \texttt{CPLEX} can effectively exploit the solution obtained in the previous iteration to find the next solution. Therefore, the first MI-LP requires the greatest memory space, but after the first MI-LP, the CDM demands less memory.

\begin{table}[htb]
\begin{center}
\caption{The number of iterations and constraints in the CDM} \label{tab:iter-cdm}
{\small 
\begin{tabular}{c|r|r r|r|r r|r}
\hline
\multirow{3}{*}{$N$} & \multirow{3}{*}{$m$} & \multicolumn{3}{c|}{gap$=5\%$} & \multicolumn{3}{c}{gap$=1\%$}\\
\cline{3-8}
& & \multirow{2}{*}{\# iteration} & \multicolumn{2}{c}{\# constraint} & \multirow{2}{*}{\# iteration} & \multicolumn{2}{c}{\# constraint}\\
\cline{4-5}\cline{7-8}
& & & First Iter & last Iter & & First Iter & Last Iter\\ 
\hline
\multirow{6}{*}{50} & 200 & 15 & 76 & 2357 & 15 & 76 & 2384\\
& 1050 & 7 & 201 & 1802 & 8 & 201 & 2086\\
& 2045 & 7 & 53 & 789 & 8 & 53 & 925\\
& 5050 & 8 & 129 & 1855 & 8 & 129 & 1855\\
& 10100 & 9 & 189 & 3338 & 12 & 189 & 4671\\
& 15222 & 13 & 56 & 3108 & 38 & 56 & 11449\\
\hline
\multirow{6}{*}{100} & 200 & 9 & 144 & 1654 & 10 & 144 & 1858\\
& 1050 & 10 & 255 & 3337 & 10 & 255 & 3337\\
& 2045 & 8 & 105 & 1696& 8 & 105 & 1696\\
& 5050 & 6 & 200 & 1973 & 6 & 200 & 1972\\
& 10100 & 6 & 281 &  2672 & 8 & 281 & 3733\\
& 15222 & 9 & 106 & 2935 & 10 & 106 & 3300\\
\hline
\end{tabular}
} 
\end{center}
\end{table}

\section{Conclusion and Future Work}\label{sec:5}
In this paper, we proposed LPP with an active constraint selection method (LPP-ACSM) and cone decomposition method (CDM) to achieve optimal contribution selection in the context of tree breeding. We compared the efficiency of the proposed methods with those found in existing breeding selection software (\texttt{GENCONT} and \texttt{dsOpt}), the optimization solver \texttt{CPLEX}, and LPP.
From the numerical results, we observed that LPP and LPP-ACSM failed to obtain solutions for problems with large $m$ due to insufficient memory. Since we used very tight $\epsilon$, the number of constraints was huge. 


Our final proposed method, CDM, can efficiently obtain the optimal solution of EDP problems. For the largest problem $m = 15222$, while \texttt{CPLEX} could not find satisfactory solutions in 3 hours, the CDM can still efficiently obtain a feasible solution without having memory limitations. The use of CDM in solving MI-LP problems can reduce the heavy computation time and memory size to generate the optimal solution.

In future studies, we consider a combination of CDM with heuristic methods, for example, the method proposed in \cite{safarina2017conic}. In particular, a feasible value obtained by the method of \cite{safarina2017conic} would give a good tentative value in the framework of branch-and-bound for solving MI-LP ($P^k$). Another direction is that the decomposition in CDM and the generation of linear cuts can be used not only to solve the OCS problem in tree breeding, but also can be applied to other MI-SOCP problems. We will also consider another problem of OCS that involves not only simple binary constraints but also other types of integer constraints.

\section{Acknowledgement}
Our work was partially supported by funding from
JSPS KAKENHI (Grant-in-Aid for Scientific Research (C), 15K00032).

\bibliographystyle{abbrv}
\bibliography{reference}

\end{document}